\newcommand{\showhide}[1]{#1} 
\numberwithin{equation}{section}
\newtheorem{thm}{Theorem}[section]
\newtheorem{cor}[thm]{Corollary}
\newtheorem{conj}[thm]{Conjecture}
\newtheorem{prop}[thm]{Proposition}
\newtheorem{lemma}[thm]{Lemma}
\newcommand{\cl}{\mathrm{cl}}
\title{Inconsquential results on the Merino-Welsh conjecture for Tutte polynomials}
\author{Joseph P.S. Kung}
\begin{document}

\begin{abstract}    The Merino-Welsh conjectures say that subject to conditions, there is an inequality among the Tutte-polynomial evaluations $T(M;2,0)$, $T(M;0,2)$, and $T(M;1,1)$. We present three results on a Merino-Welsh conjecture.  These results are ``inconsequential'' in the sense that although they imply a version of the  conjecture for many matroids,  they seem to be dead ends.   
\end{abstract}

\maketitle



\section{Merino-Welsh inequalities} \label{Intro}
There are many variations on the Merino-Welsh conjecture.  They are of the form ``subject to conditions, there is an inequality involving the evaluations
$T(M;2,0)$, $T(M;0,2)$, and $T(M;1,1)$ of the Tutte polynomial of the matroid $M$''.  We shall consider the following version. 

\begin{conj}\label{MW} 
If a matroid $M$ has no loops or isthmuses, then 
\[
T(M;2,0)+ T(M;0,2) \geq 2 T(M;1,1),
\eqno(1.1)\]
with equality if and only if $M$ is a direct sum of $U_{1,2}$, the rank-$1$ matroid consisting of two parallel elements.  
\end{conj}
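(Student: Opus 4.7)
The plan is to attempt Conjecture~\ref{MW} by induction on $|E(M)|$. Set
\[
f(M) := T(M;2,0) + T(M;0,2) - 2T(M;1,1);
\]
the goal is to show $f(M) \geq 0$, with equality exactly for direct sums of $U_{1,2}$'s. The base case $M = U_{1,2}$ is immediate from $T(U_{1,2};x,y) = x+y$, giving $f(U_{1,2}) = 2+2-4 = 0$, which also supplies the equality example.

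For the inductive step I would first reduce to the case that $M$ is connected. If $M = M_1 \oplus M_2$ with both summands loopless and coloopless, multiplicativity of the Tutte polynomial yields, writing $T_i := T(M_i;\cdot,\cdot)$,
\[
f(M_1 \oplus M_2) = T_1(2,0) T_2(2,0) + T_1(0,2) T_2(0,2) - 2 T_1(1,1) T_2(1,1).
\]
The additive inductive hypothesis $f(M_i) \geq 0$ does not obviously imply this; the natural route is through the multiplicative strengthening $T_i(2,0) T_i(0,2) \geq T_i(1,1)^2$, after which an application of AM--GM delivers $f(M_1 \oplus M_2) \geq 0$. This forces the induction to carry both the additive and multiplicative forms in tandem.

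For connected $M$, pick a non-loop, non-isthmus element $e$ and apply deletion-contraction, $T(M;x,y) = T(M\del e;x,y) + T(M/e;x,y)$, giving $f(M) = f(M\del e) + f(M/e)$. If both minors are themselves loopless and coloopless the induction closes the case. The problematic situation is when $e$ lies in a $2$-element circuit, so that $M\del e$ acquires an isthmus and $T(M\del e;0,2) = 0$, or dually in a $2$-element cocircuit; the corresponding $f$ of the minor may then be negative. The workaround I would pursue is to strip off the entire parallel or series class of $e$ at once, using the standard identities for the Tutte polynomial under parallel and series extensions to reduce to the simplification (or cosimplification) $M'$, and then to verify by direct computation that the resulting polynomial combination preserves both the additive and the multiplicative inequality, with strict inequality whenever the stripped class has size at least three.

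I expect the main obstacle to be this last step combined with the equality analysis. Carrying both inequalities through parallel/series reductions and through the deletion--contraction step is delicate, and pinning down the equality case demands showing that a $3$-connected matroid on more than two elements always forces $f(M) > 0$. That statement does not seem to follow from the same induction, and is, I suspect, precisely where an induction-plus-reduction approach of this kind hits a wall --- consistent with the paper's description of its results as ``inconsequential''.
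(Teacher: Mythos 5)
The first thing to say is that the statement you set out to prove is a \emph{conjecture}: the paper contains no proof of it, and it is presented as open. What the paper actually establishes are sufficient conditions under which inequality (1.1) holds --- Theorem \ref{density} (matroids dense enough relative to their rank), Theorem \ref{cocircuits} (every cocircuit of size at least $r+1$), and the final proposition of Section \ref{sect:cocircuits} (at least $r-1$ loops) --- all by one mechanism: reading off the top coefficients of $T$ in $y$ to bound $T(M;0,2)$ below by a power of $2$, and bounding the number of bases $T(M;1,1)$ above by $\binom{n}{r}$. So there is no proof in the paper to compare yours against, and your proposal has to be judged as an attempt on an open problem.

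Judged that way, it has genuine gaps, most of which you candidly flag yourself. The identity $f(M) = f(M\backslash e) + f(M/e)$ is useless unless both minors satisfy the inductive hypothesis, which requires them to be loopless and coloopless; when $e$ lies in a $2$-element circuit or cocircuit one minor acquires an isthmus or a loop, $T(\,\cdot\,;0,2)$ or $T(\,\cdot\,;2,0)$ vanishes, and $f$ of that minor is typically negative, so the recursion does not close. Your proposed repair --- stripping whole parallel and series classes and ``verifying by direct computation'' that both the additive and multiplicative inequalities survive --- is precisely the step you do not carry out, and it is where all the difficulty lives; the author explicitly remarks that the conjecture ``seems not to be provable by contraction--deletion arguments.'' Your treatment of the disconnected case also quietly replaces the conjecture by the strictly stronger per-component statement $T_i(2,0)T_i(0,2)\ge T_i(1,1)^2$, which nothing in the induction supplies, and the equality analysis is asserted rather than argued. (You should also be aware that the unrestricted matroid version of the conjecture has since been reported to be false, in which case no induction of this shape can succeed in full generality.) If you want a result you can actually complete, the paper's route --- trading the conjecture for a verifiable hypothesis on density, cocircuit size, or loops, and then comparing $2^{n-r}$ against $2\binom{n}{r}$ --- is the realistic one.
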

\noindent
The focus of this paper 
is on finding sufficient conditions on a matroid so that inequality (1.1) holds.   

As background, we note that $T(M;2,0)$ is the number of no-broken-circuit or nbc sets and $T(M;1,1)$ the number of bases in $M$.  Hence Conjecture \ref{MW} asserts that the average number of nbc sets in $M$ and its dual exceeds the average number of bases in $M$ and its dual.  A counting or algebraic-geometry proof of Conjecture \ref{MW} is quite possible.  From the counting interpretation, $T(M;2,0)$ and $T(M;0,2)$ are positive integers.  We shall tacitly use this fact, as well as the fact due to Tutte that the coefficients of his polynomial are non-negative.     

\section{A density condition} \label{sect:density}

The first result is that a sufficiently dense matroid satisfies Conjecture \ref{MW}.  
 
\begin{thm} \label{density}
Let $M$ be an isthmus-free matroid having size $n$ and rank $r$. Then if $r \geq 4$ and
\[
n \geq \lceil r(\log r +  \log\log r + \log\log\log r) \rceil,
\]
with the logarithms in base $2$, then $T(M;0,2) \geq 2T(M;1,1)$. In particular, inequality (1.1) holds for $M$.
\end{thm}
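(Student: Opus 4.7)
Since $T(M;2,0) \geq 0$, it suffices to establish the stronger bound $T(M;0,2) \geq 2T(M;1,1)$; inequality (1.1) then follows. The plan is to upper-bound $T(M;1,1)$ and lower-bound $T(M;0,2) = T(M^*;2,0)$ separately, then verify that the density hypothesis closes the gap.

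The upper bound is immediate: since bases of $M$ are $r$-subsets of the ground set,
\[
T(M;1,1) \leq \binom{n}{r}.
\]
For the lower bound I would use the basis-activity formula
\[
T(M;0,y) = \sum_{B \in \mathcal{B}(M)} [i(B)=0]\, y^{e(B)},
\]
valid for any linear ordering of the ground set $E$; since the polynomial is ordering-invariant the order can be chosen advantageously. Fix any basis $B_0$ of $M$ and order $E = \{1,\dots,n\}$ so that $B_0 = \{n-r+1,\dots,n\}$ is the top block. One may first reduce to the loopless case: a loop $\ell$ satisfies $T(M;0,2) = 2\, T(M\setminus\ell;0,2)$ and $T(M;1,1) = T(M\setminus\ell;1,1)$, so loops only make the inequality easier. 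With $M$ loopless and isthmus-free, for each $e \in B_0$ the fundamental cocircuit $C^*(e,B_0)$ contains at least one element besides $e$, necessarily from $E \setminus B_0 = \{1,\dots,n-r\}$, which is strictly smaller than $e$; hence $i(B_0) = 0$. Symmetrically, each $f \in E \setminus B_0$ is the minimum of its fundamental circuit $C(f, B_0) \subseteq B_0 \cup \{f\}$, so $e(B_0) = n - r$. The basis $B_0$ therefore contributes $2^{n-r}$ to the activity sum, yielding
\[
T(M;0,2) \geq 2^{n-r}.
\]

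It remains to verify $2^{n-r-1} \geq \binom{n}{r}$ under the density hypothesis. Bounding $\binom{n}{r} \leq (en/r)^r$ reduces this to $n - r - 1 \geq r\log_2 e + r \log_2(n/r)$, which is satisfied once $n \geq r(\log_2 r + \log_2 \log_2 r + \log_2 \log_2 \log_2 r)$ and $r$ is sufficiently large.

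The main obstacle is that the single-basis contribution is asymptotically tight: the three-iterated-log structure of the density hypothesis mirrors the slack in $2^{n-r-1} \geq \binom{n}{r}$. For the smallest admissible $r$ --- in particular $r = 4$, where the hypothesis only forces $n \geq 12$ --- the single-basis bound is insufficient and one must aggregate contributions from additional bases. In the chosen ordering every basis $B$ with $1 \notin B$ satisfies $e(B) \geq 1$, because the element $1$ is then externally active in $B$; and the isthmus-free hypothesis ensures a substantial supply of such bases. Quantifying this supply sharply enough to close the argument uniformly for $r \geq 4$ is the technical crux.
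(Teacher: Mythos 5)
Your two bounds are exactly the ones the paper uses: $T(M;1,1)\le\binom{n}{r}$, and $T(M;0,2)\ge 2^{n-r}$ coming from a basis of internal activity $0$ and external activity $n-r$ (the paper phrases this as the leading term $y^{n-r}$ of $T(M;x,y)$ for an isthmus-free matroid; your activities argument with the basis placed at the top of the ordering is a correct proof of that fact, and the separate loop reduction is unnecessary). So the skeleton is the same, and everything then hinges on the purely numerical inequality $2^{n-r}\ge 2\binom{n}{r}$ under the density hypothesis. This is where your write-up stops short: the estimate $\binom{n}{r}\le (en/r)^r$ plus ``satisfied for $r$ sufficiently large'' is not a proof, and you explicitly leave the small-$r$ cases as an unresolved ``technical crux.'' The paper closes this gap with a monotonicity lemma (the forward difference of $2^{n-r}-2\binom{n}{r}$ more than doubles the function once $n>2r$, so it suffices to check one value $n_r$ per rank), a hand-computed table of $n_r$ for $r\le 16$, and for $r\ge 17$ the calculus inequality $(\log x)(\log\log x)\ge \log x+\log\log x+\log\log\log x+1$ combined with $r!\ge 2^{r+1}$. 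As submitted, your proof is incomplete.

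That said, your diagnosis at $r=4$ is correct and important: with base-$2$ logarithms, $\lceil 4(\log 4+\log\log 4+\log\log\log 4)\rceil=\lceil 4(2+1+0)\rceil=12$, while $2^{12-4}=256<990=2\binom{12}{4}$; indeed $2^{n-4}<2\binom{n}{4}$ for all of $n=12,13,14,15$, and the first $n$ that works is $16$ --- which is exactly the table entry $n_4=16$, violating the paper's own requirement $n_r\le\lceil r(\log r+\log\log r+\log\log\log r)\rceil$. So Proposition 2.2, and hence this proof of the theorem, genuinely fails at $r=4$ for $12\le n\le 15$; this is a defect in the paper, not merely in your estimate, and no sharpening of the bound $\binom{n}{r}\le(en/r)^r$ can repair it. Your proposed remedy --- harvesting $y$-contributions from further bases to beat $2^{n-r}$ --- is a plausible route (and is essentially what the paper's Section 3 does under a cocircuit-size hypothesis), but you have not carried it out, so the statement as given remains unproved for $r=4$ by either argument. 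A clean fix is to restrict to $r\ge 5$ (where $\lceil r(\log r+\log\log r+\log\log\log r)\rceil\ge n_r$ does hold) or to adjust the threshold at $r=4$ to $n\ge 16$.
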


The theorem is true for the simple reason that exponential growth beats polynomial growth in the long run.      

\begin{prop}\label{inq1}  If $r \geq 4$ and $n \geq \lceil r(\log r + \log\log r + \log\log\log r) \rceil,$ then $2^{n-r} \geq 2 \binom {n}{r}$. 
\end{prop}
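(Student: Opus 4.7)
The plan is to take base-$2$ logarithms of both sides, apply a standard upper bound on $\binom{n}{r}$, substitute the hypothesized lower bound on $n$, and reduce the claim to an inequality in $r$ alone that can be verified asymptotically and then checked at the boundary.

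Taking logarithms, the inequality $2^{n-r} \geq 2\binom{n}{r}$ is equivalent to
\[
n - r - 1 \geq \log\binom{n}{r}.
\]
Using the standard bound $\binom{n}{r} \leq (en/r)^r$ (or its Stirling refinement $\binom{n}{r} \leq (2\pi r)^{-1/2}(en/r)^r$ if a tighter estimate is needed near the boundary), it suffices to prove
\[
n - r - 1 \geq r\log(n/r) + r\log e.
\]
Set $L = \log r + \log\log r + \log\log\log r$, so the hypothesis reads $n \geq \lceil rL \rceil$. The key calculation is to estimate $\log(n/r) \leq \log L$ at the threshold and then $\log L \leq \log\log r + c_r$ with $c_r \to 0$; the latter follows from
\[
\log L = \log\log r + \log\bigl(1 + \tfrac{\log\log r}{\log r} + \tfrac{\log\log\log r}{\log r}\bigr),
\]
since the parenthetical expression is bounded for $r \geq 4$.

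A monotonicity argument then reduces matters to a single base case. Since
\[
\frac{\binom{n+1}{r}}{\binom{n}{r}} = \frac{n+1}{n+1-r} \leq 2 \quad\text{for}\quad n \geq 2r - 1,
\]
while $2^{n-r}$ doubles as $n$ increases by one, once the inequality $2^{n-r} \geq 2\binom{n}{r}$ holds at some $n_0 \geq 2r-1$ it persists for all larger $n$. The hypothesis already places us well above $2r-1$ (at $r=4$ the threshold is $12$, versus $2r-1=7$), so only the base case $n = \lceil rL\rceil$ needs checking. After substitution, this reduces to an explicit inequality of the form
\[
\log r + \log\log\log r \geq 1 + \log e + o(1).
\]
The hard part will be making the $o(1)$ term quantitative and confirming that the resulting inequality holds down to $r=4$. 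If the crude $(en/r)^r$ bound is not tight enough near the boundary, the Stirling refinement supplies an extra $-\tfrac12\log(2\pi r)$ on the right-hand side and should close the gap; otherwise, a handful of the smallest values of $r$ can be dispatched by direct computation.
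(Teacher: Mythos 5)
Your skeleton is sound and is essentially the paper's: a doubling step in $n$ (your ratio computation $\binom{n+1}{r}/\binom{n}{r}=\tfrac{n+1}{n+1-r}\le 2$ for $n\ge 2r-1$ is the multiplicative form of the paper's forward-difference lemma), followed by a check at the single threshold value $n=\lceil rL\rceil$. The genuine gap is in the threshold check, and it is not something that can be waved at with ``$o(1)$.'' Made quantitative (up to the rounding in the ceiling), your reduced inequality reads
\[
\log r + \log\log\log r \;\ge\; 1 + \log e + \tfrac{1}{r} + \log\Bigl(1 + \tfrac{\log\log r}{\log r} + \tfrac{\log\log\log r}{\log r}\Bigr),
\]
whose right-hand side sits around $3.3$--$3.4$ in the relevant range, so it only holds for roughly $r\ge 8$. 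For $r=5$ the proposition is true at the threshold ($n=20$: $2^{14}=16384\ge\binom{20}{5}=15504$), but your estimate cannot see it: even the Stirling refinement gives $\binom{20}{5}\le (4e)^5/\sqrt{10\pi}\approx 2.7\times 10^4>2^{14}$. So the ``handful of smallest values'' is carrying real weight and must be settled by computing the base case directly and then invoking your monotonicity step --- which is exactly what the paper does with its table of values $n_r$ for $r\le 16$.

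Worse, at $r=4$ the base case is simply false: the threshold is $\lceil 4(2+1+0)\rceil=12$ and $2^{12-4}=256<990=2\binom{12}{4}$; the inequality first holds at $n=16$. No sharpening of your bounds can repair this --- the statement needs $r\ge 5$, or a larger threshold when $r=4$. (The paper's own proof has the same defect: its reduction requires $n_r\le\lceil rL\rceil$, yet its table lists $n_4=16>12$.) For comparison, the paper's threshold check for $r\ge 17$ avoids Stirling entirely: it uses the exact identity $2^{rL}=[r(\log r)(\log\log r)]^r$ together with the calculus fact $(\log x)(\log\log x)\ge \log x+\log\log x+\log\log\log x+1$ for $x\ge 17$ to conclude $2^{n-r}\ge n^r/2^r$, and then $n^r/2^r\ge 2n^r/r!\ge 2\binom{n}{r}$ from $r!\ge 2^{r+1}$. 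Either route works asymptotically; neither escapes a finite verification for small $r$, and that verification is where both your sketch and the stated proposition break down.
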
 




Consider the function 
\[
f(n,r) = 2^{n-r} - 2 \binom {n}{r} = 2^{n-r} -  \frac {2n(n-1)\cdots (n-r+1)}{r!},
\] 
where, in spite of the notation, $n$ could be a real number, although $r$ is always a positive integer.   
Suppose $n > 2r$.  Then its forward difference in the variable $n$ satisfies   
\begin{eqnarray*}
f(n+1,r) - f(n,r)  &=& [2^{n+1-r} - 2^{n-r}] - 2\left[\binom {n+1}{r} - \binom {n}{r}\right]
\\
&=& 2^{n-r} - 2\binom {n}{r-1}
\\
&> &   2^{n-r} - 2 \binom {n}{r} = f(n,r).
\end{eqnarray*}  
This yields the following lemma.  

\begin{lemma} If $n > 2r$, then 
$
f(n+1,r) > 2f(n,r).
$
In particular, if $f(n,r) \geq 0,$ then $f(n+1,r) > 0$.  
\end{lemma}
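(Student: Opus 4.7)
The plan is to compute the forward difference of $f$ in the variable $n$ directly, exploiting the two elementary identities
\[
2^{n+1-r} - 2^{n-r} = 2^{n-r}, \qquad \binom{n+1}{r} - \binom{n}{r} = \binom{n}{r-1},
\]
so that
\[
f(n+1,r) - f(n,r) = 2^{n-r} - 2\binom{n}{r-1}.
\]
The desired inequality $f(n+1,r) > 2f(n,r)$ is equivalent to $f(n+1,r) - f(n,r) > f(n,r)$, which, after cancelling the common term $2^{n-r}$, reduces to the binomial comparison $\binom{n}{r-1} < \binom{n}{r}$.

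To dispose of this comparison, I would use the ratio $\binom{n}{r}/\binom{n}{r-1} = (n-r+1)/r$, so that the strict inequality $\binom{n}{r-1} < \binom{n}{r}$ is equivalent to $n - r + 1 > r$, i.e., $n > 2r - 1$. The hypothesis $n > 2r$ gives this with room to spare. Chaining the steps yields $f(n+1,r) - f(n,r) > f(n,r)$, which is the first assertion. The ``in particular'' clause is then immediate: if $f(n,r) \geq 0$ then $f(n+1,r) > 2f(n,r) \geq 0$.

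There is essentially no obstacle here; the only step that requires any thought is verifying that the hypothesis $n > 2r$ is strong enough to strictly separate $\binom{n}{r-1}$ from $\binom{n}{r}$, and it is. I would also note in passing that although $f(n,r)$ is defined for real $n$, the lemma is stated for an integer shift $n \mapsto n+1$, so the use of Pascal's identity is legitimate without further comment.
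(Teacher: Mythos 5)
Your proof is correct and follows essentially the same route as the paper: compute the forward difference via Pascal's identity to get $2^{n-r}-2\binom{n}{r-1}$, then compare $\binom{n}{r-1}$ with $\binom{n}{r}$ under the hypothesis $n>2r$. The only difference is that you make the binomial comparison explicit via the ratio $(n-r+1)/r$, which the paper leaves implicit.
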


By the lemma, to prove Proposition \ref{inq1}, it suffices to find a number $n_r$ such that $n_r \leq \lceil r(\log r +  \log\log r + \log\log\log r) \rceil$ and $2^{n_r-r} \geq 2 \binom {n_r}{r}$.  For small values of $r$, this can be done by a hand calculation.  The results are shown in the following table: 

\begin{center}  
\begin{tabular}{lclclclclclclclclclclclclclclclcl}
\hline 
$r$  \rule{0pt}{15pt} &       $1$ & $2$ & $3$ & $4$  & $5$  & $6$ & $7$ & $8$ & $9$  & $10$  & $11$ & $12$ &$13$ &$14$ & $15$ & $16$
\\
\hline
\hline 
$n_r$  \rule{0pt}{15pt} &  $4$ & $8$ & $12$ & $16$  & $21$  & $25$ & $29$ & $33$ & $37$  & $42$  & $46$ & $50$ & $54$ & $59$& $64$ & $68$ 
\\
\hline
\\
\end{tabular}
\end{center}
This allows us to assume $r \geq 17$. 
\footnote{There is nothing special about $17$; it just happens that $16$ entries will fit comfortably in a one-line table.    Any sufficiently large number, like $8$, will work.} 
We begin with the fact that for $x \geq 17$, 
\[
(\log x)(\log\log x)   \geq \log x+  \log\log x + \log\log\log x + 1.
\]
This is an easy exercise in calculus.   
\footnote{To remind myself, with my memory failing in old age, let $f(x)= (\log x)(\log\log x)$ and $g(x) = \log x + \log\log x+\log\log\log x + 1$.  Then $f(17) \geq g(17) > 0$ and when we differentiate both functions, it is evident that 
\[
\frac {\log\log x}{x} + \frac {1}{x} =  f'(x) \geq g'(x) = \frac {1}{x} +  \frac {1}{x \log x} + \frac {1}{x (\log x) (\log\log x)}> 0
\] 
when $x \geq 17$. The added $1$ in $g(x)$ is there so that we can round up.} 
Thus,
\begin{align*} 
& 2^{n-r} \geq \frac {2^{r(\log r + \log\log r + \log\log\log r)}}{2^r}  
\\
&= \frac{[r(\log r)(\log\log r)]^{r}}{2^r}  \geq \frac {\lceil r(\log r +  \log\log r + \log\log\log r) \rceil^r}{2^r}  = \frac {n^r}{2^r}  \geq 2 \binom {n}{r}.
\end{align*} 
In the last step, we use $2^{r+1} \leq r!$ when $r \geq 5$. This completes the proof of Proposition \ref{inq1}.  

Returning to the proof of Theorem \ref{density}, observe that if a matroid $M$ has no isthmuses, then $T(M;x,y) = y^{n-r} + \cdots,$ where the terms of lower degree in $y$ have non-negative coefficients.  In addition, $T(M;1,1)$ is the number of bases and is bounded above by $\binom {n}{r}$.  Hence, by Proposition \ref{inq1}, under the hypothesis of the theorem,  
\[
T(M;0,2) \geq 2^{n-r} \geq 2\binom {n}{r} \geq 2T(M;1,1).  
\]

We remark that Theorem \ref{density} says that for a fixed rank $r$, there are only finitely many matroids to check to show that Conjecture \ref{MW} holds at that rank.  


\section{Cocircuits}\label{sect:cocircuits}

A \emph{cocircuit} is the complement of a copoint or flat of rank $r-1$.  A matroid is isthmus-free if and only if it has no cocircuits of size $1$; thus, one can regard the condition that there are no isthmuses as a condition on the minimum size of a cocircuit.    The following theorem says that Conjecture \ref{MW} holds if one assumes that all cocircuits are sufficiently large.   

\begin{thm} \label{cocircuits}
Suppose that every cocircuit in a rank-$r$ size-$n$ matroid $M$ has at least $r + 1$ elements.    Then  $M$ satisfies inequality (1.1).
\end{thm}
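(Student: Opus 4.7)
The plan is to mimic the strategy of Theorem~\ref{density}: establish a lower bound on $T(M;0,2)$ from the cocircuit hypothesis, combine it with a lower bound on $T(M;2,0)$, and compare against the trivial upper bound $2T(M;1,1)\leq 2\binom{n}{r}$.

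We may assume $M$ is loopless (if $M$ has $L\geq 1$ loops, then $T(M;2,0)=0$ and $T(M;0,2)=2^L T(M';0,2)$ for the loopless part $M'$, which satisfies the same hypothesis since loops belong to no cocircuit). First I would dualize: by $T(M;0,2)=T(M^*;2,0)$, the quantity $T(M;0,2)$ equals the number of no-broken-circuit (nbc) subsets of $M^*$ with respect to any fixed linear order on $E$. The cocircuit hypothesis is equivalent to $M^*$ having girth at least $r+1$; since a circuit of $M^*$ has size at most $\rk(M^*)+1=n-r+1$, this forces $n\geq 2r$.

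The high girth of $M^*$ implies that every subset of $E$ of size at most $r$ is independent in $M^*$, and every broken circuit of $M^*$ has size at least $r$. Consequently every subset of size at most $r-1$ is automatically nbc in $M^*$, and an $r$-subset is nbc unless it is itself a broken circuit. Since a broken circuit never contains the minimum element of $E$, there are at most $\binom{n-1}{r}$ such $r$-subset broken circuits, so
\[
T(M;0,2)\;\geq\;\sum_{k=0}^{r-1}\binom{n}{k}+\binom{n}{r}-\binom{n-1}{r}\;=\;\sum_{k=0}^{r-1}\binom{n}{k}+\binom{n-1}{r-1}.
\]
Combining with the elementary bound $T(M;2,0)\geq 2^r$ (the coefficient of $x^r$ in $T(M;x,0)$ is $1$ for loopless $M$) and with $T(M;1,1)\leq\binom{n}{r}$, inequality~(1.1) reduces to the purely numerical statement
\[
2^r+\sum_{k=0}^{r-1}\binom{n}{k}+\binom{n-1}{r-1}\;\geq\;2\binom{n}{r},\qquad n\geq 2r.
\]

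The main obstacle is that this numerical inequality is sharp precisely at $n=2r$ (tight for the uniform matroid $U_{r,2r}$ when $r=2,3$, with positive slack as $r$ grows) but may fail when $n$ substantially exceeds $2r$. In that regime the crude nbc lower bound above is far from the true value of $T(M;0,2)$, because $M^*$ then also has many nbc subsets of sizes exceeding $r$ (indeed, the actual girth of $M^*$ may exceed the hypothesized bound $r+1$). I expect the proof to close this gap either by sharpening the nbc count via the true girth of $M^*$, or by splitting into cases: moderate $n$ treated by the high-girth bound above, and large $n$ absorbed directly by Theorem~\ref{density}.
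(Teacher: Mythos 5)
Your reduction is honest about where it breaks, and the break is real: the numerical inequality you arrive at fails for essentially every $n>2r$. For $r=3$, $n=7$ your left-hand side is $2^3+1+7+21+\binom{6}{2}=52$ while $2\binom{7}{3}=70$. The root cause is that you only count nbc sets of $M^*$ of size at most $r$, whereas $M^*$ has rank $n-r$; once $n$ exceeds $2r$, almost all of $T(M;0,2)=T(M^*;2,0)$ comes from nbc sets of sizes between $r+1$ and $n-r$, about which your girth argument says nothing. Your proposed rescue by Theorem~\ref{density} does not close the gap either, since that theorem requires $n\gtrsim r\log r$ and so leaves the entire range $2r<n\lesssim r\log r$ uncovered.

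The missing ingredient is the exact value of the top coefficients of the Tutte polynomial in $y$: if every cocircuit of $M$ has size at least $m$, then the coefficient of $y^{n-r-j}$ in $T(M;x,y)$ equals $\binom{r+j-1}{j}$ for $0\le j\le m-2$ (the dual of Brylawski's description of the coefficients of $x^{r-j}$ under a girth hypothesis), with everything of lower $y$-degree having non-negative coefficients. Taking $m=r+1$ and using the identity $\sum_{j=0}^{r-1}\binom{r+j-1}{j}\,2^{r-1-j}=2^{2r-2}$, the paper obtains
\[
T(M;0,2)\;\ge\;2^{n-2r+1}\left(\binom{2r-2}{r-1}+\cdots+\binom{r}{1}2^{r-2}+2^{r-1}\right)=2^{n-1},
\]
and then finishes with $2^{n-1}\ge 2\binom{n}{r}\ge 2T(M;1,1)$ for $n\ge 2r$. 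The overall strategy is the same as yours (bound $T(M;0,2)$ from below and discard $T(M;2,0)$ entirely), but the coefficient formula extracts exponentially more from the cocircuit hypothesis than a count of nbc sets of size at most $r$ can. (As a side remark, the paper's final step $2^{n-1}\ge 2\binom{n}{r}$ is itself delicate near $n=2r$ for $r\le 4$, where $\binom{2r}{r}>2^{2r-2}$; but that small-case issue is separate from the one blocking your argument.)
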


We begin with an easy binomial identity.  

\begin{lemma} 
\begin{eqnarray*} 
\sum_{j=0}^{r-1}   \binom {r-j-1}{j}  y^{r-1-j}  &=& \binom {2r-2}{r-1} + \binom {2r-3}{r-2} y + \cdots + \binom {r}{1} y^{r-2} + y^{r-1} 
\\
&=& 2^{2r-2}.  
\end{eqnarray*}
\end{lemma}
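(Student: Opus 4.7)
The plan is to treat the two equalities separately. The first (left-hand sum equals the listed polynomial $\binom{2r-2}{r-1} + \binom{2r-3}{r-2}y + \cdots + y^{r-1}$) is just a reindexing: setting $i = r-1-j$, the coefficient of $y^i$ becomes $\binom{2r-2-i}{r-1-i}$, matching the expansion term-by-term. (For this to work the summand must be read as $\binom{r-1+j}{j} = \binom{r-1+j}{r-1}$ rather than the $\binom{r-j-1}{j}$ as typeset; the explicit expansion on the right pins down the sign, so I would treat the displayed binomial as a typo.)

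The substantive claim is that the middle polynomial, evaluated at $y = 2$, equals $2^{2r-2}$. I would prove this by a fair-coin argument. Flip a fair coin until either heads or tails has occurred $r$ times. By the heads/tails symmetry the probability that heads wins is exactly $1/2$. Decomposing by the length of the game, heads wins in exactly $r + j$ flips (for $0 \le j \le r-1$) precisely when the $(r+j)$-th flip is heads and the preceding $r+j-1$ flips split as $r-1$ heads and $j$ tails; this event has probability $\binom{r-1+j}{j}\,2^{-(r+j)}$. Summing gives
\[
\sum_{j=0}^{r-1}\binom{r-1+j}{j}\,2^{-(r+j)} \;=\; \frac{1}{2},
\]
and multiplying through by $2^{2r-1}$ produces $\sum_{j=0}^{r-1}\binom{r-1+j}{j}\,2^{r-1-j} = 2^{2r-2}$, which is precisely the middle expression at $y=2$.

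A purely algebraic backup I would keep in reserve is the negative-binomial identity $\sum_{j \ge 0}\binom{r-1+j}{j}\,x^{j} = (1-x)^{-r}$ specialized to $x = 1/2$ (so that the full series sums to $2^r$), coupled with a Vandermonde-style manipulation to show that the tail from $j = r$ onward contributes $2^{r-1}$; this pins the partial sum through $j = r-1$ at $2^{r-1}$ and yields the same conclusion after multiplying by $2^{r-1}$. The coin-flip argument just packages this bookkeeping more transparently. The main (and really the only) obstacle is interpretive: reconciling the polynomial middle expression with a constant right-hand side forces one to read the lemma as the identity obtained after substituting $y = 2$, presumably because the lemma will shortly be applied to the evaluation $T(M;0,2)$ in Theorem~\ref{cocircuits}. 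Once that reading is pinned down and the sign typo in the summand is corrected, every step above is elementary.
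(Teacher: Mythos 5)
Your proof is correct; note that the paper itself offers no argument for this lemma --- it is announced as ``an easy binomial identity'' and supported only by the worked instance $r=4$, namely $\binom{6}{3}+\binom{5}{2}\cdot 2+\binom{4}{1}\cdot 2^2+2^3=64=2^6$ --- so you have supplied a proof where the paper supplies none. Your two interpretive repairs are exactly the right ones: the summand must be read as $\binom{r-1+j}{j}$ (the typeset $\binom{r-j-1}{j}$ is inconsistent both with the displayed expansion and with the $r=4$ example, where it would give $y^3+2y^2=16$ at $y=2$ rather than $64$), and the second equality is the evaluation at $y=2$, which is precisely how the lemma is deployed in the proof of Theorem~\ref{cocircuits}, where the factor $2^{n-2r+1}\cdot 2^{2r-2}=2^{n-1}$ appears. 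The first equality is indeed just the reindexing $i=r-1-j$. For the substantive identity $\sum_{j=0}^{r-1}\binom{r-1+j}{j}\,2^{r-1-j}=2^{2r-2}$, your coin-flipping argument is complete and airtight: the events ``heads reaches $r$ at flip $r+j$'' for $0\le j\le r-1$ partition the event ``heads wins,'' each has probability $\binom{r+j-1}{j}2^{-(r+j)}$, symmetry gives total probability $\tfrac12$, and clearing denominators by $2^{2r-1}$ yields the identity. This is arguably more illuminating than the generating-function route you hold in reserve, and it doubles as a proof of the more general fact (implicit in the author's remark about replacing $r+1$ by $r-c\sqrt{r}$) that the partial sums of $\binom{r-1+j}{j}2^{-j}$ track a negative-binomial tail probability. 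Nothing is missing.
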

\noindent 
For example, 
\[
\binom {6}{3}2^0 +  \binom {5}{2}2^1 + \binom {4}{1}2^2 + \binom {3}{0}2^3 = 20 + 10\cdot 2 + 4 \cdot 2^2 + 2^3 = 64 = 2^6.
\]

Now observe that if every cocircuit in the matroid $M$ has size at least $m$, then 
\[
T(M;x,y) = y^{n-r} + \binom {r}{1}y^{n-r-1} + \cdots +\binom {r+m-3}{m-2}y^{n-r-m+2} + p(x,y), 
\]
where $p(x,y)$ is a polynomial with non-negative coefficients of degree $n-r-m+1$ in $y$.  In particular, if $m = r+1,$ then 
\[
T(M;0,2) \geq 2^{n-2r+1}\left( \binom {2r-2}{r-1} + \cdots + \binom {r}{1} 2^{r-2} + 2^{r-1} \right) = 2^{n-1}.  
\]
The hypothesis on cocircuits implies that $n \geq 2r$ and under this condition, $2^{n-1} \geq 2 \binom {n}{r}$.  We conclude that inequality (1.1) holds.  

Theorem \ref{cocircuits} is not the sharpest possible.  It is possible to replace the upper bound of $r+1$ by a bound of the form $r -c\sqrt{r}$ (with $c$ a constant)   for sufficiently large $r$ using the normal approximation to the binomial distribution.    It is also possible to combine the arguments to obtain sufficient conditions involving density and size of cocircuits for inequality (1.1) to hold, but neither refinement would prove the full conjecture.  
We end with one further result of this type.  

\begin{prop} 
Let $M$ be a rank-$r$ size-$n$ isthmus-free matroid with at least $r-1$ loops.  Then inequality (1.1) holds for $M$.
\end{prop}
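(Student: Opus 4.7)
The plan is to strip off the loops using the multiplicative property of the Tutte polynomial. Decompose $M = L \oplus N$, where $L$ is the rank-$0$ matroid on the $\ell \geq r-1$ loops of $M$ and $N$ is the loopless, rank-$r$, size-$(n-\ell)$ matroid on the remaining elements. Since deleting a loop does not affect the rank function on the complement, $N$ inherits isthmus-freeness from $M$. The direct-sum rule $T(M;x,y) = y^{\ell}\,T(N;x,y)$ specialises at our three points to
\[
T(M;2,0) = 0, \qquad T(M;0,2) = 2^{\ell}\,T(N;0,2), \qquad T(M;1,1) = T(N;1,1),
\]
where the first equality uses $\ell \geq 1$ (i.e.\ $r \geq 2$; the case $r = 1$, in which the loop hypothesis is vacuous, is handled by a direct computation on $U_{1,k}$). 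Thus inequality (1.1) collapses to $2^{\ell-1}\,T(N;0,2) \geq T(N;1,1)$.

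Next I would bound each side by elementary means. Because $N$ is isthmus-free, the highest $y$-monomial of $T(N;x,y)$ is $y^{(n-\ell)-r}$ with coefficient $1$, so $T(N;0,2) \geq 2^{n-\ell-r}$; while $T(N;1,1)$ counts bases of $N$ and so $T(N;1,1) \leq \binom{n-\ell}{r}$. Combining these estimates, and using $\ell \geq r-1$ to replace $n - \ell$ by its largest possible value $n - r + 1$, the claim reduces to the purely numerical statement
\[
2^{n-r-1} \;\geq\; \binom{n-r+1}{r}.
\]

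The main obstacle will be establishing this last binomial inequality over the entire admissible range. Isthmus-freeness of $N$ forces $n - \ell \geq r + 1$, hence $n \geq 2r$, and for $n$ sufficiently larger than $2r$ the bound is immediate by the exponential-beats-polynomial argument that underlies Proposition~\ref{inq1}. The delicate regime is $n$ close to $2r$ (especially for small $r$), which I would expect to handle either by a short table of explicit verifications in the spirit of Section~\ref{sect:density}, or by sharpening the lower bound $T(N;0,2) \geq 2^{n-\ell-r}$ through the next few $y$-coefficients of $T(N;x,y)$, in analogy with the cocircuit-size computation of Section~\ref{sect:cocircuits}.
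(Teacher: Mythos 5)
Your reduction is exactly the chain the paper itself uses: after splitting off the $\ell\ge r-1$ loops and invoking $T(N;0,2)\ge 2^{n-\ell-r}$ and $T(N;1,1)\le\binom{n-\ell}{r}$, everything rests on $2^{n-r-1}\ge\binom{n-r+1}{r}$, which is precisely the paper's asserted (and unjustified) middle step $2^{n-r}\ge 2\binom{n-r+1}{r}$. The obstacle you flag at the end is a genuine gap, and it cannot be closed: the inequality is false on part of the admissible range $n\ge 2r$. For $(n,r)=(4,2)$ it reads $4\ge 6$, and for $(n,r)=(7,3)$ it reads $8\ge 10$. Worse, in the first case the proposition itself fails. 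Take $M=U_{2,3}\oplus U_{0,1}$ (rank $2$, size $4$, exactly $r-1=1$ loop, isthmus-free). Then $T(M;x,y)=y(x^2+x+y)$, so $T(M;2,0)=0$, $T(M;0,2)=4$, $T(M;1,1)=3$, and $0+4=4<6=2T(M;1,1)$. So no table of small cases, and no sharpening of the lower bound on $T(N;0,2)$ via further $y$-coefficients, can rescue the argument; the hypothesis must be strengthened.

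A clean repair is to assume at least $r$ loops instead of $r-1$. With $m=n-\ell\ge r+1$ non-loops, your own reduction then gives
\[
T(M;0,2)\;\ge\;2^{\ell}\,2^{m-r}\;\ge\;2^{r}\,2^{m-r}\;=\;2\cdot 2^{m-1}\;\ge\;2\binom{m}{r}\;\ge\;2\,T(M;1,1),
\]
using only the elementary fact $\binom{m}{r}\le\binom{m}{\lfloor m/2\rfloor}\le 2^{m-1}$. Everything else in your write-up --- the direct-sum factorisation, $T(M;2,0)=0$ for $\ell\ge 1$, the separate treatment of $r=1$, and the deduction $n-\ell\ge r+1$ from isthmus-freeness --- is correct and matches the paper's intent; the difference is that you honestly isolated the numerical step that the paper's one-line proof passes over, and that step is where the proposition as stated breaks down.
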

\begin{proof} 
Loops are never in a basis.  Hence, 
\[
T(M;0,2) \geq 2^{n-r} \geq 2\binom {n-r+1}{r} \geq 2T(M;1,1)
\]
and inequality (1.1) follows.  
\end{proof}

\end{document}